\newtheorem{tw}{Theorem}[section]
\newtheorem{lem}[tw]{Lemma}
\newtheorem{pro}[tw]{Proposition}
\theoremstyle{definition}
\newtheorem{exa}[tw]{Example}
\newtheorem{rem}[tw]{Remark}
\begin{document}

%\footnote{The authors are supported by the Grant  of Polish Minister
%of Science and Higher Education N N201 382634}
\begin{center}
{\Large Convex conjugates of analytic functions of logarithmically convex functionals}
\end{center}
\begin{center}
{\sc Krzysztof Zajkowski}\\
Institute of Mathematics, University of Bialystok \\ 
Akademicka 2, 15-267 Bialystok, Poland \\ 
E-mail:kryza@math.uwb.edu.pl 
\end{center}

\begin{abstract}
Let $f_{\bf c}(r)=\sum_{n=0}^\infty e^{c_n}r^n$ be an analytic function; ${\bf c}=(c_n)\in l_\infty$. We assume that $r$ is some logarithmically
convex and lower semicontinuous functional on a locally convex topological space $L$. In this paper we derive a formula on the Legendre-Fenchel transform of a functional
$\widehat{\lambda}({\bf c},\varphi)=\ln f_{\bf c}(e^{\lambda(\varphi)})$, where $\lambda(\varphi)=\ln r(\varphi)$ ($\varphi\in L$).
In this manner we generalize to the infinite case Theorem 3.1 from \cite{OZ1}. 

\end{abstract}

{\it 2010 Mathematics Subject Classification:}  44A15, 47A10, 47B37 

{\it Key words: Legendre-Fenchel transform, logarithmic convexity, log-exponential function, entropy function, spectral radius, weighted composition operators}

\section{Introduction}
In Convex analysis, it is natural to consider the Legendre-Fenchel transform of convex composite functions. The general rules of convex
conjugate calculus were obtained  in \cite{CLT1,CLT2}. Therein one can find the history of these investigations. The general formulas are
not always informative. It appears papers in which authors present forms of convex conjugates for some concrete compositions functions,
see for instance \cite{HiU,M-LS}. In this paper we would like to present investigations of the spectral radius of weighted composition operators
which lead to considerations of compositions with the so-called log-exponential function.

First we present a general result obtained
for the spectral radius of weighted composition operators.
Let $X$ be a  Hausdorff compact space with Borel measure $\mu$, $\alpha:X\mapsto X$ a continuous mapping preserving
$\mu$ (i.e. $\mu\circ\alpha^{-1}=\mu$) and $a$ be a continuous function on $X$.
Antonevich, Bakhtin and Lebedev constructed a functional $\tau_\alpha$, called $T$-entropy (see \cite{ABL2,ABL3}), on the set of probability
and $\alpha$-invariant measures
$M^1_\alpha$ such that for the spectral radius of the weighted composition operator $
(aT_\alpha)u(x)=a(x)u(\alpha(x))$ 
acting in $L^p$-spaces the following variational principle holds
\begin{equation}
\label{form2}
\ln r(aT_\alpha)=\max_{\nu\in M^1_\alpha}
\Big\{\int_X\ln|a|d\nu-\frac{\tau_\alpha(\nu)}{p}\Big\}.
\end{equation}
It turned out that $\tau_\alpha$ is nonnegative, convex and lower semicontinuous on $M^1_\alpha$.

For positive $a \in C(X)$ let $\varphi=\ln a$ and $\lambda(\varphi)=\ln r( e^{\varphi}T_\alpha)$. The functional $\lambda$ is convex and continuous
on $C(X)$ and 
the formula (\ref{form2}) states
that $\lambda$ is the Legendre-Fenchel transform of the function $\frac{\tau_\alpha}{p}$, i.e.
\begin{equation}
\label{vp}
\lambda(\varphi)=\max_{\nu\in M^1_\alpha}
\Big\{\int_X\varphi d\nu-\lambda^\ast(\nu)\Big\},
\end{equation}
where
$$
\lambda^{\ast}(\nu)=\left\{ \begin{array}{ll}
\frac{\tau_\alpha(\nu)}{p}, \qquad \nu\in M_\alpha^1 \\[8pt]   
+\infty, \qquad  {\rm otherwise .}  
\end{array} \right.\\
$$
It means that the effective domain $D(\lambda^{\ast})$ is contained in $M^1_{\alpha}$. 

In operators algebras, it is natural to consider functions of operators. In the context of weighted composition operators, a problem arises 
how  the formula (\ref{vp}) changes when instead of $aT_\alpha$ we take  some functions of one.

First results was obtained for polynomials of $aT_\alpha$. Let $\sum_{n=0}^Na_nz^n$ be a polynomial with positive coefficients $a_n$.
The convex conjugate of $\widetilde{\lambda}(\varphi)=\ln r(\sum_{n=0}^Na_n(e^\varphi T_\alpha)^n)$ is equal to
\begin{equation}
\label{lambdaf}
\widetilde{\lambda}^{\ast}(m)= m(X) \lambda^\ast\Big(\frac{m}{m(X)}\Big) + \min_{{\bf t} \in S_{m(X)}} \sum_{n=0}^N t_n \ln \frac{t_n}{a_n}, 
\end{equation}
where $S_{m(X)}=\{(t_n)_{n=0}^N :\;t_n\ge 0,\; \sum_{n=0}^N t_n=1\;{\rm and}\; \sum_{n=0}^N n t_n = m(X)\}$ and the effective domain of $\widetilde{\lambda}^{\ast}$ is contained
in the set 
$
\{m \in C(X)^{\ast}: m \in M_{\alpha}\;\; {\rm and}\;\;   m(X) \in [0,N] \};
$
$M_{\alpha}$ is the set of all $\alpha$-invariant measures on $X$ (see \cite{OZ1,OZ2} for more details) .

Let $c_n$ denote $\ln a_n$. If we consider dependence of the logarithm of the spectral radius also on the vector $(c_n)_{n=0}^N$, that is
if we consider the functional $\widehat{\lambda}((c_n),\varphi)=\ln r(\sum_{n=0}^Ne^{c_n}r(e^\varphi T_\alpha)^n)$, then the convex conjugate
$\widehat{\lambda}$ has the form
\begin{equation}\label{Ltran} 
\widehat{\lambda}^{\ast}((t_n),\bar{\mu})=\bar{\mu}(X) \lambda^\ast\Big(\frac{\bar{\mu}}{\bar{\mu}(X)}\Big) + \sum_{n=0}^N t_n \ln t_n 
\end{equation}
and the effective domain of $\widehat{\lambda}^{\ast}$ is contained in the set 
$$
\mathcal{M}=\Big\{((t_n),\bar{\mu}):\;t_n\ge 0,\; \sum_{n=0}^N t_n=1,\; \bar{\mu}\in M_\alpha\;{\rm and\; }\; \bar{\mu}(X) = \sum_{n=0}^N n t_n \Big\}
$$ 
(see \cite{OZ1} for more details).

The generalization of the formula (\ref{lambdaf}) on the case of analytic functions was obtained in \cite{OZ3} where authors proved that for
$\widetilde{\lambda}(\varphi)=\ln r(\sum_{n=0}^\infty a_n(e^\varphi T_\alpha)^n)$ (we present this result only in the case of all $a_n>0$) the convex conjugate has the form
$$
\widetilde{\lambda}^{\ast}(m)=m(X) \lambda^\ast\Big(\frac{m}{m(X)}\Big) + \min_{(t_n) \in S_{m(X)}} \liminf_{N\to \infty} \sum_{n=0}^N t_n \ln \frac{t_n}{a_n},  
$$
where $S_{m(X)}=\{(t_n)_{n=0}^\infty:\;t_n\ge 0,\; \sum_{n=0}^\infty t_n=1,\;\sum_{n=0}^\infty n t_n<+\infty\;{\rm and}\; \sum_{n=0}^\infty nt_n=m(X)\}$ and the effective domain
of $\widetilde{\lambda}^{\ast}$ is contained in the set of all $\alpha$-invariant measures $M_\alpha$.

To generalize the formula (\ref{Ltran}) we had to solve two problems: to define the functional $\widehat{\lambda}$ for infinite number
of variables $c_n$ and to consider the entropy function with infinite numbers of summands. It is known that not for all infinite
sequences of probability weights $(t_n)$ the entropy function $\sum_{n=0}^\infty t_n\ln t_n$ takes finite values (see Example \ref{przyk}). A solution of these problems there is in Theorem \ref{gen}.

\section{Convex conjugates}
We begin by recalling Proposition 2.3 from \cite{OZ3}.
\begin{pro} 
\label{logseries}
For the sequence $(b_n)_{n=0}^{\infty}$ such that $b_n> 0$  and  $\sum_{n=0}^{\infty}b_n < \infty$ the following holds 
$$ 
\ln\sum_{n=0}^\infty b_n  = \max_{\substack{t_n\ge 0,\; \sum t_n=1}} 
\limsup_{N\to\infty}\sum_{n=0}^N (t_n \ln b_n -t_n\ln t_n);
$$ 
it is  taken that $0\ln 0=0$.
This maximum is attained at $t_n=\frac{b_n}{\sum_{n=0}^\infty b_n}$.
\end{pro}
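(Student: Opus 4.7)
The plan is to prove the identity via two matching inequalities: exhibit a sequence $(t_n)$ that achieves the claimed value $\ln\sum_{n=0}^\infty b_n$, and show that no other admissible $(t_n)$ can exceed it. Throughout, the convention $0\ln 0=0$ lets me restrict attention to indices with $t_n>0$ without loss of generality.

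For the achievement direction I would simply plug in $t_n^\ast = b_n/B$, where $B=\sum_{k=0}^\infty b_k$. A direct computation gives
$$t_n^\ast \ln b_n - t_n^\ast\ln t_n^\ast = \frac{b_n}{B}\bigl(\ln b_n-\ln b_n+\ln B\bigr) = \frac{b_n}{B}\ln B,$$
so $\sum_{n=0}^N (t_n^\ast\ln b_n - t_n^\ast\ln t_n^\ast) = \bigl(\sum_{n=0}^N b_n\bigr)\ln B/B \to \ln B$ as $N\to\infty$. Hence the $\limsup$ at $(t_n^\ast)$ is exactly $\ln\sum_{n=0}^\infty b_n$, and this sequence is a legitimate candidate (nonnegative, summing to one).

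For the upper bound, let $(t_n)$ be any probability sequence and put $T_N=\sum_{n=0}^N t_n$. I would apply Jensen's inequality to the concave function $\ln$ with the renormalized weights $t_n/T_N$ (equivalently, the finite log-sum inequality) to obtain
$$\sum_{n=0}^N (t_n\ln b_n - t_n\ln t_n) = T_N\sum_{n=0}^N \frac{t_n}{T_N}\ln\frac{b_n}{t_n} \le T_N\ln\!\Bigl(\sum_{n=0}^N \frac{b_n}{T_N}\Bigr) = T_N\ln\!\Bigl(\sum_{n=0}^N b_n\Bigr) - T_N\ln T_N.$$
Now $T_N\to 1$, $T_N\ln T_N\to 0$, and $\sum_{n=0}^N b_n\to B$ as $N\to\infty$, so the right-hand side converges to $\ln B$. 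Consequently $\limsup_{N\to\infty}\sum_{n=0}^N(t_n\ln b_n-t_n\ln t_n)\le \ln\sum_{n=0}^\infty b_n$, which matches the lower bound and identifies $t_n^\ast$ as the (essentially unique) maximizer by the equality case of Jensen.

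I do not expect a significant obstacle: the argument is a routine two-sided estimate. The only point requiring mild care is to pass from the finite Jensen/log-sum bound to a statement about the $\limsup$ over $N$, and to handle zero weights via the standard convention so that dropping or keeping them leaves the partial sums unchanged.
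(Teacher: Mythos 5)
Your proof is correct and follows essentially the same route as the paper's (which appears for this proposition in the cited source): the upper bound via Jensen's inequality for $\ln$ applied to the renormalized weights $t_n/T_N$ with $x_n=b_n/t_n$, the passage to the limit using $T_N\to 1$, and the verification that $t_n=b_n/\sum_k b_k$ attains the value $\ln\sum_n b_n$. The only additions beyond the paper's argument are your explicit remarks on the $0\ln 0$ convention and on uniqueness of the maximizer, both of which are harmless.
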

%\begin{proof}
%Concavity of the logarithm function implies 
%$$
%\ln \Big(\sum_{n=0}^N p_n x_n\Big) \geq \sum_{n=0}^N p_n \ln x_n,
%$$
%for any sequence $(p_n)_{n=0}^N$ of probability weights  and $x_n > 0$. Let $(t_n)_{n=0}^\infty$ be an infinite sequence such that $t_n>0$ and $\sum_{n=0}^\infty t_n=1$.
%Substituting $p_n=\frac{t_n}{\sum_{n=0}^N t_n}$ and $x_n=\frac{b_n}{t_n}$ in the above inequality we obtain the following
%$$
%\ln \sum_{n=0}^N b_n - \ln \sum_{n=0}^N t_n \geq \sum_{n=0}^N \frac{t_n}{\sum_{n=0}^N t_n}\ln\frac{b_n}{t_n} 
%$$
%and, equivalently, 
%$$
%\Big(\ln \sum_{n=0}^N b_n - \ln \sum_{n=0}^N t_n\Big)\sum_{n=0}^N t_n \geq \sum_{n=0}^N (t_n\ln b_n - t_n\ln t_n).  
%$$ 
%Observe that under the assumption $\sum_{n=0}^\infty t_n =1$ the limit of the left-hand side, by $N$ tending to $+\infty$, exists and is equal to %$\ln \sum_{n=0}^\infty b_n$. For this reason we get that
%$$
%\ln \sum_{n=0}^{\infty} b_n \geq \limsup_{N\to\infty} \sum_{n=0}^N (t_n\ln b_n - t_n\ln t_n).
%$$
%Because for $t_n= \frac{b_n}{\sum_{n=0}^{\infty} b_n}$, in the above, the equality holds then the proof is complete.  
 
%\end{proof}

\begin{exa}
Let $b_n$ equals $r^n$ for $r\in (0,1)$. Then the sum of the series $\sum_{n=0}^\infty r^n=1/(1-r)$ and by the above Proposition we obtain formula
\begin{equation}
\label{geom}
-\ln(1-r)=\max_{\substack{t_n\ge 0,\; \sum t_n=1}} \limsup_{N\to\infty}\sum_{n=0}^N (nt_n \ln r -t_n\ln t_n)
\end{equation}
that can be rewritten as follows
\begin{equation}
\label{geom1}
\ln(1-r)=\min_{\substack{t_n\ge 0,\; \sum t_n=1}} \liminf_{N\to\infty}\sum_{n=0}^N t_n\ln\frac{t_n}{r^n}. 
\end{equation}

By Proposition \ref{logseries}, it is known that in this case the above maximum is attained for the geometric distribution, i.e. at $t_n=(1-r)r^n$.
Let us emphasize that for the geometric distribution $((1-r)r^n)_{n=0}^\infty$ the series 
$\sum_{n=0}^\infty nt_n=\sum_{n=0}^\infty n(1-r)r^n=r/(1-r)$ is convergent.
For this reason  we can search  for the  maximum in (\ref{geom})
under the additional restricted condition $\sum_{n=0}^\infty nt_n<\infty $. So
we can rewrite (\ref{geom}) in the form 
$$
-\ln(1-r)=\max_{\substack{t_n\ge 0,\; \sum t_n=1 \\ \sum nt_n<\infty }}\Big\{\ln r\sum_{n=0}^\infty nt_n  -\sum_{n=0}^\infty t_n\ln t_n\Big\},
$$
where the series $\sum_{n=0}^\infty t_n\ln t_n$ is either convergent or divergent to minus infinity. But the second opportunity is not possible because then the above maximum  will be equal to $+\infty$ and it will not  to be finite. It means that the restricted condition $\sum_{n=0}^\infty nt_n<+\infty$ ensures the convergence of the series $\sum_{n=0}^\infty t_n\ln t_n$. 
\end{exa}
Basing on this example we can formulate the following
\begin{pro}
\label{entf}
If a probability distribution $(t_n)$ satisfies the condition $\sum_{n=0}^\infty nt_n<\infty$ then the series $\sum_{n=0}^\infty t_n\ln t_n$ is convergent.
\end{pro}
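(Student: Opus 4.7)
The plan is to derive the convergence of $\sum_{n=0}^\infty t_n \ln t_n$ from Proposition \ref{logseries} by a contradiction argument, exactly in the spirit of the preceding example. The key observation is that since $t_n \in [0,1]$ we have $t_n \ln t_n \le 0$ for every $n$, so the partial sums $\sum_{n=0}^N t_n \ln t_n$ are monotonically nonincreasing in $N$. Consequently this sum has a well-defined limit in $[-\infty, 0]$, and to prove the claim it suffices to rule out the value $-\infty$.

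To do this, I would fix an arbitrary $r \in (0,1)$ and apply Proposition \ref{logseries} with $b_n = r^n$, which yields the identity (\ref{geom}), namely
$$
-\ln(1-r) \;=\; \max_{t_n \ge 0,\; \sum t_n = 1} \;\limsup_{N\to\infty} \sum_{n=0}^N \bigl(n t_n \ln r - t_n \ln t_n\bigr).
$$
In particular, for any fixed probability distribution $(t_n)$ satisfying $\sum_{n=0}^\infty n t_n < \infty$, the right-hand side provides an upper bound
$$
\limsup_{N\to\infty} \sum_{n=0}^N \bigl(n t_n \ln r - t_n \ln t_n\bigr) \;\le\; -\ln(1-r) \;<\; +\infty.
$$

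Next, I would evaluate this $\limsup$ explicitly. Because $\sum n t_n$ is finite, $\sum_{n=0}^N n t_n \ln r$ converges (to a finite, negative number), and because $-t_n \ln t_n \ge 0$, the partial sums $-\sum_{n=0}^N t_n \ln t_n$ form a nondecreasing sequence with limit $-\sum_{n=0}^\infty t_n \ln t_n \in [0, +\infty]$. Splitting the $\limsup$ into these two pieces, I obtain
$$
(\ln r) \sum_{n=0}^\infty n t_n \;-\; \sum_{n=0}^\infty t_n \ln t_n \;\le\; -\ln(1-r).
$$
If the entropy-type series were equal to $-\infty$, the left-hand side would be $+\infty$, contradicting the finite bound on the right. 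Hence $\sum_{n=0}^\infty t_n \ln t_n$ is finite, proving the proposition.

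The main step is the justification that the $\limsup$ over partial sums factors into the sum of two separate limits; this is routine given the sign of $-t_n \ln t_n$ and the absolute convergence (in fact just summability) of $\sum n t_n \ln r$, so no genuine obstacle arises and the argument reduces to invoking Proposition \ref{logseries} with the geometric weights.
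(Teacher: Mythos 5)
Your argument is correct and is essentially the same as the paper's: the paper proves this proposition via the discussion in the preceding example, bounding $\ln r\sum_{n=0}^\infty nt_n-\sum_{n=0}^\infty t_n\ln t_n$ above by $-\ln(1-r)$ using Proposition \ref{logseries} with $b_n=r^n$ and ruling out divergence of the entropy series to $-\infty$. Your additional care in justifying the splitting of the $\limsup$ (via the sign of $-t_n\ln t_n$ and the summability of $\sum nt_n$) only makes the argument more explicit.
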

The above statement is only a sufficient condition for the convergence of the series $\sum_{n=0}^\infty t_n\ln t_n$.
\begin{exa}
Using for instance the sum of series $\sum_{n=1}^\infty 1/n^2=\pi^2/6$ and taking $t_n=6/(\pi n)^2$ we see that the series 
$\sum_{n=1}^\infty nt_n=\sum_{n=1}^\infty 6/(\pi^2n)$
is divergent but the series 
$$
\sum_{n=1}^\infty t_n\ln t_n= \frac{6}{\pi^2}\sum_{n=1}^\infty \frac{1}{n^2}\Big(\ln\frac{6}{\pi^2}-2\ln n\Big)=
\ln\frac{6}{\pi^2}-\frac{12}{\pi^2}\sum_{n=1}^\infty\frac{\ln n}{n^2}
$$ 
is convergent.
\end{exa}

Obviously   not for all probability distributions $(t_n)$ series of the form $\sum_{n=1}^\infty t_n\ln t_n$ are convergent. 
\begin{exa}
\label{przyk}
Using Cauchy condensation test one can check that the series $\sum_{n=2}^\infty 1/(n(\ln n)^2)$ is convergent and the series 
$\sum_{n=2}^\infty 1/(n\ln n)$ divergent. Denoting the sum $\sum_{n=2}^\infty 1/(n(\ln n)^2)$ by $a$ and taking $t_n= 1/(n(\ln n)^2a)$
one can check that 
\begin{eqnarray*}
\sum_{n=2}^\infty t_n\ln t_n & = & \sum_{n=2}^\infty \frac{1}{n(\ln n)^2a}\ln\frac{1}{n(\ln n)^2a}\\
\; & = & -\ln a-\frac{1}{a}\sum_{n=2}^\infty \frac{1}{n\ln n}
-\frac{2}{a}\sum_{n=2}^\infty \frac{\ln\ln n}{n(\ln n)^2}=-\infty.
\end{eqnarray*}
\end{exa}

For each $(c_n)\in l_\infty$  the radius of convergence of an analytic function $f_{\bf c}(r)=\sum_{n=0}^\infty e^{c_n}r^n$  equals 1. Taking in Proposition \ref{logseries} $b_n=e^{c_n}r^n$ for $r\in(0,1)$ and $(c_n)\in l_\infty$ we obtain
\begin{eqnarray}
\label{angeom}
\ln\sum_{n=0}^\infty e^{c_n}r^n & = & \max_{\substack{t_n\ge 0,\; \sum t_n=1}} \limsup_{N\to\infty}\sum_{n=0}^N (c_nt_n+nt_n \ln r -t_n\ln t_n)\nonumber\\
\; & = & \max_{\substack{t_n\ge 0,\; \sum t_n=1}}\Big\{\sum_{n=0}^\infty c_nt_n-\liminf_{N\to\infty}\sum_{n=0}^N t_n\ln\frac{t_n}{r^n}\Big\}.
\end{eqnarray}
For each $(c_n)\in l_\infty$ the series $\sum_{n=0}^\infty c_nt_n$ is the  bounded linear functional on the space $l_1$.
Let $S$ denote the infinite standard symplex $\{{\bf t}=(t_n)_{n=0}^\infty:\;t_n\ge 0,\; \sum t_n=1\}$. 
Notice that $S\subset l_1$ and $l_\infty\simeq (l_1)^\ast$.

Consider now the expression $\ln\sum_{n=0}^\infty e^{c_n}r^n$ as the functional $\lambda_r:l_\infty\mapsto \mathbb{R}$, i.e.
$$
\lambda_r({\bf c})=\ln\sum_{n=0}^\infty e^{c_n}r^n \quad {\rm for }\quad {\bf c}=(c_n)\in l_\infty\quad(r\in(0,1)).  
$$
The expression (\ref{angeom}) means that $\lambda_r$ is the convex conjugate of a functional $h_r:l_1\mapsto \mathbb{R}$
defined as follows
$$
h_r({\bf t})=\left\{ \begin{array}{lll}
\liminf_{N\to\infty}\sum_{n=0}^N t_n\ln\frac{t_n}{r^n}\quad &{\rm if}&\quad {\bf t}\in S,\\   
+\infty \qquad & {\rm if} &\quad {\bf t}\in l_1\setminus S .  
\end{array} \right.\\
$$
The effective domain of $h_r$ is contained in $S$. Moreover by (\ref{geom1}) we have that $h_r({\bf t})\ge\ln(1-r)$ for ${\bf t}\in l_1$.
Since the function $x\ln(x/a)$ $(a>0)$ is convex on $(0,+\infty)$ we have that $\sum_{n=0}^N t_n\ln\frac{t_n}{r^n}$ is convex
on $(0,+\infty)^{N+1}$ but because in the definition of $h_r$ appears the lower limit then one (we) can not prove convexity of it on $S$.
For these reasons we only get that $\lambda_r$ is the convex conjugate of $h_r$  and $\lambda_r^\ast$ is convex  and lower semicontinuous regularization of $h_r$.
% we write this fact as $\lambda_r^\ast=\widetilde{h}_r$.

By $\widetilde{S}$ we will denote the subset of sequences belonging to $S$ and satisfying the condition $\sum nt_n<\infty$. 
%$\widetilde{S}=\{{\bf t}=(t_n)_{n=0}^\infty:\;(t_n)\in S\;{\rm and}\;\sum nt_n<\infty\} $
By Proposition \ref{logseries} the maximum in (\ref{angeom}) is attained at the sequence $(e^{c_n}r^n/f_{\bf c}(r))$. Notice that 
for this sequence the value
$$
\sum_{n=0}^\infty nt_n=\frac{r}{f_{\bf c}(r)}\sum_{n=1}^\infty ne^{c_n}r^{n-1}=\frac{rf_{\bf c}'(r)}{f_{\bf c}(r)} 
$$
is finite for $r\in(0,1)$.
Thus, using Proposition \ref{entf}, we can express (\ref{angeom}) as follows
\begin{equation}
\label{angeom1}
\ln\sum_{n=0}^\infty e^{c_n}r^n=\max_{\substack{{\bf t}\in\widetilde{S}}}\Big\{\sum_{n=0}^\infty c_nt_n-\Big(\sum_{n=0}^\infty t_n\ln t_n-\ln r\sum_{n=0}^\infty nt_n \Big)\Big\}.
\end{equation}
\begin{rem}
Considering the logarithm of the finite sum $\sum_{n=0}^N e^{c_n}r^n$ we can assume that $r$ is any nonnegative number. All series in the above
formula become finite sums, $\widetilde{S}$ will be the $(N+1)$-dimensional standard symplex and for $r=1$ we get the classical variational principle
for the so-called log-exponential function (see Example 11.12 in \cite{RocWet}).

\end{rem}
%for ${\bf c}\in l_\infty$.
Define now the function $g_r$ on $l_1$ in the following way
$$
g_r({\bf t})=\left\{ \begin{array}{lll}
\sum_{n=0}^\infty t_n\ln t_n - \ln r\sum_{n=0}^\infty nt_n \qquad & {\rm if}& {\bf t}\in\widetilde{S},\\   
+\infty \qquad  & {\rm if}& {\bf t}\in l_1\setminus \widetilde{S} .  
\end{array} \right.\\
$$
The series $\sum_{n=0}^\infty nt_n$ is a linear but unbounded functional on $l_1$. Because the space $l_0$ of sequences with finite supports
is dense in $l_1$ and $\sum_{n=0}^\infty nt_n<\infty$ for ${\bf t}\in l_0$ then the set $\widetilde{S}$ 
%of sequences of $S$ satisfying the condition $\sum_{n=0}^\infty nt_n<\infty$ 
is dense in $S$; besides it is a convex subset of $S$. 
The function $g_r$ is convex on
$l_1$ and we can defined $\lambda_r^\ast$  as the lower semicontinuous regularization of $g_r$.
% $\lambda_r^\ast=\bar{g}_r$. 
%We can look at the functions $h_r$ and $g_r$ as the upper and lower estimations (approximations) of $\lambda_r^\ast$, respectively.

Let $\lambda$ denote $\ln r$. If $r\in(0,1)$ then $\lambda\in(-\infty, 0)$. 
Consider now the expression $\ln\sum_{n=0}^\infty e^{c_n}r^n=\ln\sum_{n=0}^\infty e^{c_n+n\lambda}$ as a functional 
$\tilde{\lambda}:l_\infty\times \mathbb{R}\mapsto\mathbb{R}\cup\{+\infty\}$ defined
in the following way

\begin{equation}
\label{lambda1}
\tilde{\lambda}({\bf c},\lambda)=\left\{ \begin{array}{lll}
\ln\sum_{n=0}^\infty e^{c_n+n\lambda} \quad & {\rm if}& ({\bf c},\lambda)\in l_\infty\times (-\infty,0),\\   
+\infty  \qquad  {\rm otherwise .}  
\end{array} \right.\\
\end{equation}
We can rewrite (\ref{angeom1}) as
\begin{equation}
\label{angeom3}
\tilde{\lambda}({\bf c},\lambda)=\max_{\substack{{\bf t}\in\widetilde{S}}}\Big\{\sum_{n=0}^\infty c_nt_n+\lambda\sum_{n=0}^\infty nt_n -\sum_{n=0}^\infty t_n\ln t_n\Big\}.
\end{equation}
By fixing ${\bf t}\in\widetilde{S}$ the expression $\lambda\sum_{n=0}^\infty nt_n -\sum_{n=0}^\infty t_n\ln t_n$, as a linear function of the variable $\lambda$, is the convex
conjugate of a function
\begin{equation}
\label{angeom2}
f_{\bf t}(a)=\left\{ \begin{array}{lll}
\sum_{n=0}^\infty t_n\ln t_n \qquad {\rm if}\quad a=\sum_{n=0}^\infty nt_n,\\
+\infty \qquad  {\rm otherwise},  
\end{array} \right.\\
\end{equation}
that is   $\lambda\sum_{n=0}^\infty nt_n -\sum_{n=0}^\infty t_n\ln t_n=\max_{a\in\mathbb{R}}\{a\lambda-f_{\bf t}(a)\}$. Since $a=\sum_{n=0}^\infty nt_n$
is always nonnegative, we may search for this maximum only over the set $\mathbb{R}_+=[0,+\infty)$.
 
Changing ${\bf t}$ and taking $f({\bf t},a)=f_{\bf t}(a)$ we can express $\tilde{\lambda}$ as follows
\begin{eqnarray}
\label{form1}
\tilde{\lambda}({\bf c},\lambda) & = & \max_{\substack{{\bf t}\in\widetilde{S} }}\Big\{\sum_{n=0}^\infty c_nt_n+
\max_{a\in\mathbb{R}_+}\{a\lambda-f({\bf t},a)\}\Big\}\nonumber\\
\; & = & \max_{{\bf t}\in\widetilde{S} }\max_{a\in\mathbb{R}_+}\Big\{\sum_{n=0}^\infty c_nt_n+\lambda a-f({\bf t},a)\Big\}.
\end{eqnarray}
Define now the set 
$$
D_{\tilde{\tau}}=\{({\bf t},a)\in l_1\times \mathbb{R}:\;{\bf t}\in\widetilde{S},\;a\ge 0\;{\rm and}\;a=\sum_{n=0}^\infty nt_n\}
$$
and the function
\begin{equation}
\label{tau}
\tilde{\tau}({\bf t},a)=\left\{ \begin{array}{lll}
\sum_{n=0}^\infty t_n\ln t_n \quad & {\rm if}&({\bf t},a)\in D_{\tilde{\tau}},\\
+\infty \qquad  {\rm otherwise}.  
\end{array} \right.
\end{equation}
Now we can rewrite (\ref{form1}) as follows
$$
\tilde{\lambda}({\bf c},\lambda)=\max_{({\bf t},a)\in D_{\tilde{\tau}}}\Big\{\sum_{n=0}^\infty c_nt_n+\lambda a-\sum_{n=0}^\infty t_n\ln t_n\Big\}.
$$
Notice that for $({\bf c},a)\in l_\infty\times\mathbb{R}$ the expression $\sum_{n=0}^\infty c_nt_n+\lambda a$ is a linear and bounded functional on the space $l_1\times \mathbb{R}$  and the above formula
means that $\tilde{\lambda}$ is the convex conjugate of $\tilde{\tau}$. 

%Notice that for $({\bf c},a)\in l_\infty\times\mathbb{R}$ the expression $\sum_{n=0}^\infty c_nt_n+sa$ is the bounded and linear functional on the space $l_1\times \mathbb{R}$ ($({\bf t},s)\in l_1\times\mathbb{R}$).

\begin{pro}
The convex conjugate of $\tilde{\lambda}$ defined by (\ref{lambda1}) is the lower semicontinuous regularization of the functional $\tilde{\tau}$ defined by (\ref{tau}).
\end{pro}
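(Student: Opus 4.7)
The plan is to recognize that the computations immediately preceding the statement already establish $\tilde{\lambda}=\tilde{\tau}^\ast$, the convex conjugate of $\tilde{\tau}$ with respect to the canonical duality between $l_1\times\mathbb{R}$ and $l_\infty\times\mathbb{R}$. Granted this, the proposition is just the Fenchel--Moreau biconjugate theorem: $\tilde{\lambda}^\ast=(\tilde{\tau}^\ast)^\ast=\tilde{\tau}^{\ast\ast}$ equals the largest lower semicontinuous convex minorant of $\tilde{\tau}$, and since $\tilde{\tau}$ will be seen to be convex, this minorant is precisely the lower semicontinuous regularization of $\tilde{\tau}$.

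To carry this out I would fill in two gaps. First, the chain (\ref{angeom3})--(\ref{form1}) and the display following (\ref{tau}) only verifies $\tilde{\lambda}({\bf c},\lambda)=\tilde{\tau}^\ast({\bf c},\lambda)$ for $({\bf c},\lambda)\in l_\infty\times(-\infty,0)$. For $\lambda\ge 0$, where definition (\ref{lambda1}) sets $\tilde{\lambda}=+\infty$, I would check that the supremum of $\sum c_nt_n+\lambda a-\sum t_n\ln t_n$ over $D_{\tilde{\tau}}$ is also $+\infty$. For $\lambda>0$, take the sequence $t_n=\delta_{nN}$ (so $a=N$, entropy zero) and let $N\to\infty$ to drive $c_N+\lambda N$ to $+\infty$ using ${\bf c}\in l_\infty$. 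For $\lambda=0$, take the uniform distribution $t_n=1/N$ for $0\le n<N$ (so $a=(N-1)/2\in[0,\infty)$); then the value equals $\tfrac{1}{N}\sum_{n=0}^{N-1}c_n+\ln N$, which tends to $+\infty$, again matching $\tilde{\lambda}({\bf c},0)=+\infty$ (note that $\sum e^{c_n}$ diverges for bounded ${\bf c}$, since $e^{c_n}\ge e^{-\|{\bf c}\|_\infty}>0$).

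Second, I would verify the hypotheses under which Fenchel--Moreau identifies $\tilde{\tau}^{\ast\ast}$ with the lsc regularization. Convexity of $\tilde{\tau}$: the set $D_{\tilde{\tau}}$ is the intersection of the convex set $\widetilde{S}\times[0,\infty)$ with the affine constraint $a=\sum nt_n$, hence convex, and ${\bf t}\mapsto\sum t_n\ln t_n$ is convex on $\widetilde{S}$ by term-wise convexity of $s\mapsto s\ln s$ (with the convergence of this series on $\widetilde{S}$ guaranteed by Proposition \ref{entf}). The existence of a continuous affine minorant is automatic from Step 1: picking $({\bf c}_0,\lambda_0)=({\bf 0},-1)$, where $\tilde{\lambda}({\bf 0},-1)=\ln\sum_{n=0}^\infty e^{-n}=\ln\frac{e}{e-1}<\infty$, the Young-type inequality
\[
\tilde{\tau}({\bf t},a)\ \ge\ \langle({\bf c}_0,\lambda_0),({\bf t},a)\rangle-\tilde{\lambda}({\bf c}_0,\lambda_0)\ =\ -a-\ln\tfrac{e}{e-1}
\]
holds on $l_1\times\mathbb{R}$. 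Fenchel--Moreau then yields that $\tilde{\lambda}^\ast=\tilde{\tau}^{\ast\ast}$ is the lsc convex hull of $\tilde{\tau}$, which coincides with the lsc regularization since $\tilde{\tau}$ itself is convex.

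The only non-mechanical step is the boundary case $\lambda=0$ in Step 1, which requires the uniform-distribution test sequence rather than a Dirac-type one to produce the blow-up through the entropy term instead of through $\lambda a$; once this is handled, the rest is a direct application of the standard duality machinery.
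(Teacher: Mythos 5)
Your proposal is correct and follows essentially the same route as the paper: both rest on the identity $\tilde{\lambda}=\tilde{\tau}^\ast$ established just before the proposition, verify convexity of $D_{\tilde{\tau}}$ and of $\tilde{\tau}$ (via convexity of $\widetilde{S}$, the affine constraint $a=\sum_{n}nt_n$, and the entropy function), and conclude by biconjugation that $\tilde{\lambda}^\ast=\tilde{\tau}^{\ast\ast}$ is the lower semicontinuous regularization. You additionally check two points the paper leaves implicit (that $\tilde{\tau}^\ast=+\infty$ for $\lambda\ge 0$ and that $\tilde{\tau}$ admits a continuous affine minorant so that Fenchel--Moreau applies), which only strengthens the argument.
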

\begin{proof}
We should prove that $D_{\tilde{\tau}}$ is a convex subset of $l_1\times \mathbb{R}$ and $\tilde{\tau}$ is a convex function on it.
Let $({\bf t}^1,a_1)$, $({\bf t}^2,a_2)$ belong to $D_{\tilde{\tau}}$ and $s$ in the interval $(0,1)$. Consider the element
$$
s({\bf t}^1,a_1)+(1-s)({\bf t}^2,a_2)=(s{\bf t}^1+(1-s){\bf t}^2,sa_1+(1-s)a_2).
$$
Since $\widetilde{S}$ is convex, $s{\bf t}^1+(1-s){\bf t}^2\in \widetilde{S}$ for ${\bf t}^1,\;{\bf t}^2\in \widetilde{S}$ and $s\in (0,1)$.
Moreover, if $\sum_{n=0}^\infty nt_n^1=a_1$ and $\sum_{n=0}^\infty nt_n^2=a_2$ then $sa_1+(1-s)a_2=\sum_{n=0}^\infty n(st_n^1+(1-s)t_n^2)$.
It follows the convexity of $D_{\tilde{\tau}}$ and the convexity of $\tilde{\tau}$ results from the convexity of the entropy function.

\end{proof}

Let now $r:L\mapsto (0,+\infty]$ be a logarithmically convex and lower semicontinuous functional defined on a locally convex topological space $L$
and  $\lambda(\varphi)=\ln r(\varphi)$ for $\varphi\in L$. The functional $\lambda$ is convex and lower semicontinuous.
Let $D(\lambda^\ast)$ denote the effective domain of the convex conjugate of $\lambda$  . For the functional $\lambda:L\mapsto \mathbb{R}\cup\{+\infty\}$ the following 
variational principle holds
\begin{equation}
\label{logconv}
\lambda(\varphi)=\sup_{\mu\in D(l^\ast)}\{\left\langle\mu,\varphi\right\rangle-\lambda^\ast(\mu)\},\qquad\varphi\in L. 
\end{equation}
Let $\mathcal{D}_\lambda$ denote a set $\{\varphi\in L:\;\lambda(\varphi)<0\}$. We assume that $\mathcal{D}_\lambda$ is nonempty set. 
Define now the functional $\widehat{\lambda}:l_\infty\times L\mapsto \mathbb{R}\cup\{+\infty\}$ as follows 
\begin{equation}
\label{hatlam}
\widehat{\lambda}({\bf c},\varphi)=\left\{ \begin{array}{lll}
\ln\sum_{n=0}^\infty e^{c_n+n\lambda(\varphi)} \quad & {\rm if}&\quad({\bf c},\varphi)\in l_\infty\times \mathcal{D}_\lambda,\\
+\infty \qquad  {\rm otherwise}.  
\end{array} \right.
\end{equation}

The effective domain $D(\widehat{\lambda})$ is included in $l_\infty\times \mathcal{D}_\lambda$.  
Substituting (\ref{logconv}) into (\ref{angeom3}) we obtain
\begin{eqnarray}
\label{angeom4}
\widehat{\lambda}({\bf c},\varphi)& = & \max_{\substack{{\bf t}\in\widetilde{S} }}
\Big\{\sum_{n=0}^\infty c_nt_n+\sup_{\mu\in D(s^\ast)}\{\left\langle\mu,\varphi\right\rangle-\lambda^\ast(\mu)\}\sum_{n=0}^\infty nt_n -\sum_{n=0}^\infty t_n\ln t_n\Big\}\nonumber\\
\; & = & \max_{\substack{{\bf t}\in\widetilde{S}}}\sup_{\mu\in D(\lambda^\ast)}
\Big\{\sum_{n=0}^\infty c_nt_n+\left\langle\Big(\sum_{n=0}^\infty nt_n\Big)\mu,\varphi\right\rangle-\Big(\sum_{n=0}^\infty nt_n\Big)\lambda^\ast(\mu)-\sum_{n=0}^\infty t_n\ln t_n\Big\}.\nonumber\\
\end{eqnarray}
Notice that for ${\bf t}\in\widetilde{S}$ the expression $\sum_{n=0}^\infty nt_n$ can be any nonnegative number, moreover 
$\sum_{n=0}^\infty nt_n=0$ if and only if ${\bf t}=e_0$ ($e_0=(1,0,0,...)$). The value of the above expression in the curly brackets at $e_0$ is equal to $c_0$ 
and is less than $\widehat{\lambda}({\bf c},\varphi)=\ln\sum_{n=0}^\infty e^{c_n+n\lambda(\varphi)}$ for any ${\bf c}\in l_\infty$ and 
$\varphi\in \mathcal{D}_\lambda$. For this reason we can search for the above maximum without the point $e_0$.
%The point $e_0$ is a cluster point (in the norm and also weak-$\ast$ topology) of $\widetilde{S}$.

Let $\bar{\mu}$ denote  $\Big(\sum_{n=0}^\infty nt_n\Big)\mu$ and 
%$$
%D_{\bar{\mu}}=\{\bar{\mu}=c\mu:\;c\in [0,+\infty)\;{\rm and}\;\mu\in D(s^\ast)\};
%$$
%in other words $D_{\bar{\mu}}$ is the cone spaned in $L^\ast$ by the effective domain $D(s^\ast)$ of $s^\ast$.  

$$
D_{\widehat{\tau}}=\Big\{({\bf t},\bar{\mu})\in l_1\times L^\ast:\;{\bf t}\in \widetilde{S}\setminus \{e_0\}\;{\rm and}\;\frac{\bar{\mu}}{\sum_{n=0}^\infty nt_n}\in D(\lambda^\ast)\Big\}.
$$
Define the functional $\widehat{\tau}:l_1\times L^\ast\mapsto \mathbb{R}\cup \{+\infty\}$ in the following way
\begin{equation}
\label{hattau}
\widehat{\tau}({\bf t},\bar{\mu})=\left\{ \begin{array}{ll}
(\sum_{n=0}^\infty nt_n)\lambda^\ast\Big(\frac{\bar{\mu}}{\sum_{n=0}^\infty nt_n}\Big)+\sum_{n=0}^\infty t_n\ln t_n \quad {\rm if}\quad ({\bf t},\bar{\mu})\in D_{\widehat{\tau}},\\
+\infty  \qquad  {\rm otherwise}.  
\end{array} \right.\\
\end{equation}
Now we can rewrite (\ref{angeom4}) as follows
$$
\widehat{\lambda}({\bf c},\varphi)=\sup_{({\bf t},\bar{\mu})\in D_{\widehat{\tau}}}\Big\{\sum_{n=0}^\infty c_nt_n+\left\langle\bar{\mu},\varphi\right\rangle-\widehat{\tau}({\bf t},\bar{\mu})\Big\}.
$$
It means that $\widehat{\lambda}$ is the convex conjugate of $\widehat{\tau}$ that is $\widehat{\lambda}=\widehat{\tau}^\ast$.
\begin{pro}
\label{formhattau}
The convex conjugate of $\widehat{\lambda}$ defined by (\ref{hatlam}) is the lower semicontinuous regularization of the functional $\widehat{\tau}$ defined by (\ref{hattau}).
\end{pro}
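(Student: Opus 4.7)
My plan is to invoke the Fenchel--Moreau biconjugate theorem. The display preceding the proposition already establishes $\widehat{\lambda} = \widehat{\tau}^{\ast}$, so $\widehat{\lambda}^{\ast} = \widehat{\tau}^{\ast\ast}$, which by general convex analysis coincides with the largest convex lower semicontinuous minorant of $\widehat{\tau}$. To conclude that this minorant is simply the lower semicontinuous regularization of $\widehat{\tau}$ itself, it suffices to verify that $\widehat{\tau}$ is already a convex function on $l_1\times L^{\ast}$ (with the value $+\infty$ outside $D_{\widehat{\tau}}$). Thus the proof reduces to two verifications: convexity of the domain $D_{\widehat{\tau}}$ and convexity of the formula defining $\widehat{\tau}$.

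First I would check that $D_{\widehat{\tau}}$ is convex. The set $\widetilde{S}\setminus\{e_0\}$ is convex, because the previous proposition's argument already shows $\widetilde{S}$ is convex and $e_0$ is an extreme point, so no convex combination of two distinct points of $\widetilde{S}\setminus\{e_0\}$ can equal $e_0$. Given $({\bf t}^i,\bar{\mu}_i)\in D_{\widehat{\tau}}$ for $i=1,2$, put $a_i=\sum_{n} n t^i_n>0$ and $\mu_i=\bar{\mu}_i/a_i\in D(\lambda^{\ast})$. For $s\in(0,1)$ the convex combination $s({\bf t}^1,\bar{\mu}_1)+(1-s)({\bf t}^2,\bar{\mu}_2)$ has denominator $sa_1+(1-s)a_2>0$, and the associated ratio $\bar{\mu}/\sum n t_n$ equals $\theta\mu_1+(1-\theta)\mu_2$ with $\theta=sa_1/(sa_1+(1-s)a_2)\in(0,1)$. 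Since $D(\lambda^{\ast})$ is convex (the effective domain of a convex function), this ratio again belongs to $D(\lambda^{\ast})$, so $D_{\widehat{\tau}}$ is convex.

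The main technical step is convexity of $\widehat{\tau}$ itself. The entropy term $\sum_{n} t_n\ln t_n$ is convex in ${\bf t}$ by the classical pointwise application of $x\mapsto x\ln x$. The first summand $(\sum n t_n)\,\lambda^{\ast}(\bar{\mu}/\sum n t_n)$ is the perspective of $\lambda^{\ast}$ composed with the linear map $({\bf t},\bar{\mu})\mapsto(\bar{\mu},\sum n t_n)$; since $\lambda^{\ast}$ is convex and the scaling factor $\sum n t_n$ is strictly positive on $\widetilde{S}\setminus\{e_0\}$, the standard fact that the perspective of a convex function is jointly convex in its variable and its positive scalar yields convexity of this term. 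Summing, $\widehat{\tau}$ is convex on $D_{\widehat{\tau}}$ and, extended by $+\infty$ outside, it is a convex extended-real-valued function on $l_1\times L^{\ast}$.

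The main obstacle I anticipate is being careful with the perspective construction when the ``scale'' $\sum_{n} n t_n$ is an unbounded linear form on $l_1$: even though this form fails to be continuous, one needs that it is finite and strictly positive on $\widetilde{S}\setminus\{e_0\}$, so the perspective formula is unambiguously defined there and the usual convexity identity $(sa_1+(1-s)a_2)\lambda^{\ast}\bigl(\tfrac{s\bar{\mu}_1+(1-s)\bar{\mu}_2}{sa_1+(1-s)a_2}\bigr)\le s a_1\lambda^{\ast}(\mu_1)+(1-s)a_2\lambda^{\ast}(\mu_2)$ can be applied pointwise on $D_{\widehat{\tau}}$. Once convexity of $\widehat{\tau}$ is in hand, Fenchel--Moreau concludes the proof, because the biconjugate of a convex function coincides with its lower semicontinuous regularization.
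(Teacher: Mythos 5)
Your proposal is correct and follows essentially the same route as the paper: both reduce the claim to convexity of $D_{\widehat{\tau}}$ and of $\widehat{\tau}$, verify that the normalized measure $\frac{s\bar{\mu}_1+(1-s)\bar{\mu}_2}{sa_1+(1-s)a_2}$ is a convex combination of $\bar{\mu}_1/a_1$ and $\bar{\mu}_2/a_2$ in $D(\lambda^{\ast})$, and then combine convexity of the entropy term with the perspective-type inequality for $\lambda^{\ast}$. Your explicit appeal to Fenchel--Moreau and your remark that $e_0$ is an extreme point of $\widetilde{S}$ merely make explicit what the paper leaves implicit.
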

\begin{proof}
Let $({\bf t}_1,\bar{\mu}_1)$ and $({\bf t}_2,\bar{\mu}_2)$ belong to $D_{\widehat{\tau}}$. Consider an element 
$$
s({\bf t}_1,\bar{\mu}_1)+(1-s)({\bf t}_2,\bar{\mu}_2)=(s{\bf t}_1+(1-s){\bf t}_2,s\bar{\mu}_1+(1-s)\bar{\mu}_2)
$$
for $s\in(0,1)$. Since $\widetilde{S}\setminus \{e_0\}$ is convex, $s{\bf t}_1+(1-s){\bf t}_2$ in $\widetilde{S}\setminus \{e_0\}$. Let
$a_1$  and $a_2$ denote $\sum_{n=0}^\infty nt_n^1$ and $\sum_{n=0}^\infty nt_n^2$, respectively. We should check that
\begin{equation}
\label{nal}
\frac{s\bar{\mu}_1+(1-s)\bar{\mu}_2}{sa_1+(1-s)a_2}\in D(\lambda^\ast).
\end{equation}
Notice that $\bar{\mu}_1/a_1$, $\bar{\mu}_2/a_2$ in $D(\lambda^\ast)$ and 
$$
\frac{s\bar{\mu}_1+(1-s)\bar{\mu}_2}{sa_1+(1-s)a_2}=\frac{sa_1}{sa_1+(1-s)a_2}\frac{\bar{\mu}_1}{a_1}+
\frac{(1-s)a_2}{sa_1+(1-s)a_2}\frac{\bar{\mu}_2}{a_2}.
$$
Because $D(\lambda^\ast)$ is convex and $sa_1/(sa_1+(1-s)a_2)$, $[(1-s)a_2]/(sa_1+(1-s)a_2)$ are positive and their sum to be $1$
then (\ref{nal}) is valid.

The entropy function $\sum_{n=0}^\infty t_n\ln t_n$ is convex on $\widetilde{S}$ and the convexity of $\lambda^\ast$ implies
\begin{equation*}
(sa_1+(1-s)a_2)\lambda^\ast\Big(\frac{s\bar{\mu}_1+(1-s)\bar{\mu}_2}{sa_1+(1-s)a_2}\Big) 
 \le sa_1\lambda^\ast\Big(\frac{\bar{\mu}_1}{a_1}\Big)+(1-s)a_2\lambda^\ast\Big(\frac{\bar{\mu}_2}{a_2}\Big).
\end{equation*}
For these reasons the functional $\widehat{\tau}$ is convex on $l_1$ which completes the proof.

\end{proof}

Return now to the spectral radius of the weighted composition operator $e^\varphi T_\alpha$. This operator, considered in $L^p$-spaces
(Banach lattices), is an example of positive operators. The spectral radii of such operators belong to their spectrums (see Prop. 4.1 in Ch. V in \cite{Sch}).
Analytic functions with positive coefficients of positive operators are also positive ones and their spectral radii also belong
to their spectrums. This fact yields that $r(f_{\bf c}(e^\varphi T_\alpha))= f_{\bf c}(r(e^\varphi T_\alpha))$. This means that investigating
the spectral radius $r(f_{\bf c}(e^\varphi T_\alpha))$ we can consider the functions of the spectral radius $f_{\bf c}(r(e^\varphi T_\alpha))$.
Since $r(e^\varphi T_\alpha)$ depends logarithmically on $\varphi$, we have investigated $\ln f(e^{\lambda(\varphi)})$, where 
$\lambda(\varphi)= \ln r(e^\varphi T_\alpha)$. 

Proposition \ref{formhattau} gives us the general form of the Legendre-Fenchel transform of this functional. In our case we can precise
some details. Because $D(\lambda^\ast)$ is included in the set of all probability and $\alpha$-invariant measures $M_\alpha^1$ then
$\bar{\mu}=(\sum_{n=0}^\infty nt_n)\mu$ (${\bf t}\in\widetilde{S}$ and $\mu\in M_\alpha^1$) is any $\alpha$-invariant measure. For
$\bar{\mu}\equiv 0$, using the Legendre-Fenchel transform, in the same way as in the proof of the Theorem 3.1 form \cite{OZ1}, one can
calculate that $\lambda^\ast(e_0,{\bf 0})=0$. In this way we can formulate a generalization of  Theorem 3.1 form \cite{OZ1}. 
\begin{tw}
\label{gen}
Let $X$ be a  Hausdorff compact space with Borel measure $\mu$, $\alpha:X\mapsto X$ a continuous mapping preserving
$\mu$  and $\varphi$ be a continuous function on $X$. Let $\lambda:C(X)\mapsto \mathbb{R}$ denote a functional being the spectral exponent of  weighted composition
operators $e^\varphi T_\alpha$ acting in the space $L^p(X,\mu)$; $\lambda(\varphi)=\ln r(e^\varphi T_\alpha)$. Then the Legendre-Fenchel transform of  $\widehat{\lambda}$ defined 
by (\ref{hatlam}) is the lower semicontinuous regularization of a functional  defined by the following formula
$$
\widehat{\lambda}^\ast({\bf t},\bar{\mu})=\frac{1}{p}\Big(\sum_{n=0}^\infty nt_n\Big)\tau_\alpha\Big(\frac{\bar{\mu}}{\sum_{n=0}^\infty nt_n}\Big)+\sum_{n=0}^\infty t_n\ln t_n 
$$
on the set $\{({\bf t},\bar{\mu})\in l_1\times C(X)^\ast:\;{\bf t}\in \widetilde{S}\setminus \{e_0\},\;\bar{\mu}\in M_\alpha\; {\rm and}\;
\bar{\mu}(X)=\sum_{n=0}^\infty nt_n\}$ and $+\infty$ otherwise. The functional $\tau_\alpha$ is $T$-entropy. When $\bar{\mu}(X)=0$ then $\widehat{\lambda}^\ast$ takes 
value zero.
\end{tw}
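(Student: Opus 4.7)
The theorem is a specialization of Proposition \ref{formhattau} to the concrete functional $\lambda(\varphi) = \ln r(e^\varphi T_\alpha)$ acting on $L = C(X)$. The plan is therefore in three steps: first verify that this $\lambda$ meets the hypotheses of the proposition; second, read off $\lambda^{\ast}$ from the known ABL variational principle and substitute; third, deal with the boundary point $\bar{\mu} \equiv 0$ that was excluded from $D_{\widehat{\tau}}$ in the general statement.

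For the first step, $r(e^\varphi T_\alpha)$ is logarithmically convex on $C(X)$ (this is the content of the ABL variational principle (\ref{form2})), and as noted already in the introduction it is in fact continuous on $C(X)$, hence certainly lower semicontinuous. Moreover $\mathcal{D}_\lambda = \{\varphi \in C(X):\ \lambda(\varphi) < 0\}$ is nonempty because one may take any sufficiently small $\varphi$ (spectral radius shrinks continuously as $\varphi \to -\infty$). Hence Proposition \ref{formhattau} applies and yields that the Legendre--Fenchel transform of $\widehat{\lambda}$ is the lower semicontinuous regularization of the functional $\widehat{\tau}$ given by (\ref{hattau}).

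For the second step, combine (\ref{vp}) with the identification of $\lambda^\ast$ coming from (\ref{form2}): the effective domain $D(\lambda^\ast)$ is contained in $M_\alpha^1$, with $\lambda^\ast(\nu) = \tau_\alpha(\nu)/p$ for $\nu \in M_\alpha^1$. Substituting this into (\ref{hattau}) and using the reparametrization $\bar{\mu} = (\sum n t_n)\mu$, one has $\bar{\mu}/(\sum n t_n) \in D(\lambda^\ast)$ precisely when this normalized measure lies in $M_\alpha^1$, i.e. when $\bar{\mu}$ is $\alpha$-invariant with total mass $\bar{\mu}(X) = \sum n t_n$. On that set the formula of $\widehat{\tau}$ reduces to the announced
$$
\widehat{\lambda}^\ast({\bf t},\bar{\mu}) \;=\; \frac{1}{p}\Big(\sum_{n=0}^\infty n t_n\Big) \tau_\alpha\!\Big(\frac{\bar{\mu}}{\sum_{n=0}^\infty n t_n}\Big) + \sum_{n=0}^\infty t_n \ln t_n,
$$
and it is $+\infty$ otherwise, which matches the description in the theorem.

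The only delicate point, and in my view the main obstacle, is the boundary case $\bar{\mu}(X) = 0$, which corresponds to $\sum n t_n = 0$, i.e.\ ${\bf t} = e_0$; this is the point Proposition \ref{formhattau} explicitly excluded because of the division by $\sum n t_n$. To handle it I would compute $\widehat{\lambda}^\ast(e_0,0)$ directly from the definition of the Legendre--Fenchel transform: $\widehat{\lambda}^\ast(e_0,0) = \sup_{({\bf c},\varphi)}\{c_0 - \widehat{\lambda}({\bf c},\varphi)\}$. The bound $\widehat{\lambda}({\bf c},\varphi) = \ln \sum_{n \ge 0} e^{c_n + n\lambda(\varphi)} \ge c_0$ gives one inequality, and the reverse is obtained by sending $c_n \to -\infty$ for $n \ge 1$ while keeping $\varphi \in \mathcal{D}_\lambda$ fixed, so that the series in the logarithm tends to $e^{c_0}$. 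This yields $\widehat{\lambda}^\ast(e_0,0) = 0$, exactly as in the proof of Theorem 3.1 of \cite{OZ1}, and completes the identification stated in the theorem.
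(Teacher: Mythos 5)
Your proposal follows essentially the same route as the paper, which derives the theorem by specializing Proposition \ref{formhattau} to $\lambda(\varphi)=\ln r(e^\varphi T_\alpha)$, identifying $\lambda^\ast=\tau_\alpha/p$ on $M_\alpha^1$ via the variational principle (\ref{form2}), and computing the value at $\bar{\mu}\equiv 0$ directly from the Legendre--Fenchel transform as in Theorem 3.1 of \cite{OZ1}. Your write-up is in fact somewhat more explicit than the paper's (which leaves the verification of the hypotheses of Proposition \ref{formhattau} and the boundary computation $\widehat{\lambda}^\ast(e_0,{\bf 0})=0$ to the reader), and both of those added details are correct.
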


\end{document}